\newcommand{\nnum}{\nonumber}
\newcommand{\EQ}{\begin{eqnarray}}
\newcommand{\EN}{\end{eqnarray}}
\newcommand{\EQQ}{\begin{eqnarray*}}
\newcommand{\ENN}{\end{eqnarray*}}
\newcommand{\bremark}{\begin{remark} \begin{rm} }
\newcommand{\eremark}{ \end{rm} \rule{1mm}{2mm}
\end{remark} }
\newcommand{\btheorem}{\begin{theorem} \begin{rm} }
\newcommand{\etheorem}{ \end{rm} \rule{1mm}{2mm}
\end{theorem} }
\newcommand{\blemma}{\begin{lemma} \begin{rm} }
\newcommand{\elemma}{ \end{rm} \rule{1mm}{2mm}
\end{lemma} }
\newcommand{\bcorollary}{\begin{corollary} \begin{rm} }
\newcommand{\ecorollary}{ \end{rm} \rule{1mm}{2mm}
\end{corollary} }
\newcommand{\bdefinition}{\begin{definition}\begin{rm} }
\newcommand{\edefinition}{ \end{rm} \rule{1mm}{2mm}
\end{definition} }
\newcommand{\bproposition}{\begin{proposition} \begin{rm} }
\newcommand{\eproposition}{ \end{rm} \rule{1mm}{2mm}
\end{proposition} }
\newcommand{\bexample}{\begin{example} \begin{rm} }
\newcommand{\eexample}{ \end{rm} \rule{1mm}{2mm}
\end{example} }
\newcommand{\basm}{\begin{assumption} \begin{rm}}
\newcommand{\easm}{\end{rm} 
\end{assumption}}
\newcommand{\real}{\mathds{R}}
\newcommand{\inputseq}{\mathbf{u}}
\newcommand{\stateseq}{\mathbf{x}}
\newcommand{\subscr}[2]{#1_{\textup{#2}}}
\newtheorem{theorem}{\bf Theorem}[section]
\newtheorem{lemma}{\bf Lemma}[section]
\newtheorem{definition}{\bf Definition}[section]
\newtheorem{remark}{\bf Remark}[section]
\newtheorem{corollary}{\bf Corollary}[section]
\newtheorem{proposition}{\bf Proposition}[section]
\newtheorem{example}{\bf Example}[section]
\newtheorem{assumption}{\bf Assumption}[section]
\newcommand\oprocendsymbol{\hbox{$\bullet$}}
\newcommand\oprocend{\relax\ifmmode\else\unskip\hfill\fi\oprocendsymbol}
\date{}
\begin{document}

\title{On the performance analysis of resilient networked control systems under replay attacks}

\author{ Minghui Zhu and Sonia Mart{\'\i}nez \thanks{M. Zhu is with the
    Laboratory for Information and Decision Systems,
    Massachusetts Institute of Technology, 77 Massachusetts Avenue, Cambridge MA, 02139, {\tt\small (mhzhu@mit.edu)}. S. Mart{\'\i}nez is with the Department of
    Mechanical and Aerospace Engineering, University of California,
    San Diego, 9500 Gilman Dr, La Jolla CA, 92093, {\tt\small
      (soniamd@ucsd.edu)}.\newline This work was supported by AFOSR
    Grant 11RSL548.}}

\maketitle

\begin{abstract}
  This paper studies a resilient control problem for discrete-time,
  linear time-invariant systems subject to state and input
  constraints. State measurements and control commands are transmitted
  over a communication network and could be corrupted by
  adversaries. In particular, we consider the replay attackers who
  maliciously repeat the messages sent from the operator to the
  actuator. We propose a variation of the receding-horizon control law
  to deal with the replay attacks and analyze the resulting
  system performance degradation. A class of competitive (resp. cooperative) resource allocation problems for resilient networked control systems is also investigated.
\end{abstract}

\section{Introduction}

The recent advances of information technologies have boosted the
emergence of networked control systems where information networks are
tightly coupled to physical processes and human intervention. Such
sophisticated systems create a wealth of new opportunities at the
expense of increased complexity and system vulnerability. In
particular, malicious attacks in the cyber world are a current
practice and a major concern for the deployment of networked control
systems. Thus, the ability to analyze their consequences becomes of
prime importance in order to enhance the resilience of these
new-generation control systems.

This paper considers a single-loop remotely-controlled system, in
which the plant, together with a sensor and an actuator, and the
system operator are spatially distributed and connected via a
communication network. In particular, state measurements are
communicated from the sensor to the system operator through the
network; then, the generated control commands are transmitted to the
actuator through the same network. This model is an abstraction of a
variety of existing networked control systems, including supervisory
control and data acquisition (SCADA) networks in critical
infrastructures (e.g., power systems and water management systems) and
remotely piloted unmanned aerial vehicles (UAVs). The objective of the
paper is to design and analyze resilient controllers against replay attacks.

\emph{Literature review.} Recently, the cyber security of control systems has received increasing attention. The research effort has been devoted to studying two aspects: attack detection and attack-resilient control. Regarding attack detection, a particular class of cyber attacks, namely \emph{false data injection}, against state estimation is studied in~\cite{FP-RC-FB:11,AT-SA-HS-KHJ-SSS:10,LX-YM-BS:10}. The
paper~\cite{YM-BS:09} studies the detection of the \emph{replay
attacks}, which maliciously repeat transmitted data. In the context of multi-agent systems, the papers of~\cite{FP-AB-FB:09b,SS-CNH:11}
determine conditions under which consensus multi-agent systems can
detect misbehaving agents. As for attack-resilient control, the papers~\cite{SA-XL-SS-AMB:10,MZ-SM:11b,MZ-SM:ACC11} are
devoted to studying \emph{deception attacks}, where attackers
intentionally modify measurements and control
commands. \emph{Denial-of-service} (DoS) attacks destroy the data
availability in control systems and are tackled in recent
papers~\cite{SA-AC-SSS:09,SA-GAS-SSS:10,GKB-VG-PJA:11,AG-CL-TB:10}. More
specifically, the papers~\cite{SA-AC-SSS:09,AG-CL-TB:10} formulate
finite-horizon LQG control problems as dynamic zero-sum games between
the controller and the jammer. In~\cite{SA-GAS-SSS:10}, the authors
investigate the security independency in infinite-horizon LQG against
DoS attacks, and fully characterize the equilibrium of the induced
game. In our paper~\cite{MZ-SM:acc-12},
a distributed receding-horizon control law is proposed to ensure that
vehicles reach the desired formation despite the DoS and replay
attacks.

The problems of control and estimation over unreliable communication
channels have received considerable attention over the last
decade~\cite{JH-PN-YX:07}. Key issues include band-limited
channels~\cite{DL-JPH:05,GNN-FF-SZ-RJE:07},
quantization~\cite{RWB-DL:00,GNN-RJE-IMYM-WM:04}, packet
dropout~\cite{VG-NM:10,OCI-SY-TB:06,LS-BS-MF-KP-SSS:07},
delay~\cite{MSB-SMP-WZ:00} and
sampling~\cite{DN-AT:04}. Receding-horizon networked control is
studied in~\cite{BD:11,VG-BS-SA-AG:06,DP-PDC:08} for package dropouts
and in~\cite{KK-KH:12,GPL-JXM-DR-SCC:06} for transmission
delays. Package dropouts and DoS attacks (resp. transmission delays and replay attacks) cause similar affects to control systems. So the existing receding-horizon control approaches exhibit the robustness to certain classes of DoS and replay attacks under their respective assumptions. However, none of these papers characterizes the performance
degradation of receding-horizon control induced by the communication
unreliability.

\emph{Contributions.} We study a variation of the receding-horizon
control under the replay attacks. A set of sufficient
conditions are provided to ensure asymptotical and exponential
stability. More importantly, we derive a simple and explicit relation
between the infinite-horizon cost and the computing and attacking
horizons. By using such relation, we characterize a class of competitive (resp. cooperative) resource allocation problems for resilient networked control systems as convex games (resp. programs). The preliminary results are published in~\cite{MZ-SM:ACC11}
where receding-horizon control is used to deal with a class of
deception attacks. The technical relations between this paper
and~\cite{MZ-SM:ACC11} will be explained at the very beginning of
Section~\ref{sec:analysis}.

\section{Attack-resilient receding-horizon control}

\subsection{Description of the controlled system}

Consider the following discrete-time, linear time-invariant dynamic system:
\begin{align}
  x(k+1) = A x(k) + B u(k),\label{e10}
\end{align}
where $x(k)\in\real^n$ is the system state, and $u(k)\in\real^m$ is
the system input at time $k\ge 0$. The matrices $A\in \real^{n\times
  n}$ and $B \in \real^{n\times m}$ represent the state and the input
matrix, respectively. States and inputs of system~\eqref{e10} are constrained
to be in some sets; i.e., $x(k)\in X$ and $u(k)\in U$, for all
$k\geq0$, where $0\in X\subseteq \real^n$ and $0\in U\subseteq
\real^m$. The quantities $\|x(k)\|_P^2$ and $\|u(k)\|_Q^2$ are running
state and input costs, respectively, for some $P$ and $Q$
positive-definite and symmetric matrices. We assume the following holds for the
system:

\begin{assumption}\textbf{(Stabilizability)} The pair $(A,B)$ is stabilizable. \oprocend\label{asm2}
\end{assumption}

This assumption ensures the existence of $K$ such that the spectrum
$\sigma(\bar{A})$ is strictly inside the unit circle where $\bar{A}
\triangleq A + BK$. In the remainder of the paper, $u = K x$ will be
referred to as the auxiliary controller. We then impose the following
condition on the constraint sets.

\begin{assumption}\textbf{(Constraint sets)} The sets $X$ and~$U$ are convex and $K x \in U$ for $x\in  X$.\oprocend\label{asm5}
\end{assumption}

\subsection{The closed-loop system with the replay attacker}

System~\eqref{e10} together with the sensor and the actuator are
spatially separated from the operator. These entities are connected
through communication channels. In the network, there is a replay
attacker who maliciously repeats the messages delivered from the
operator to the actuator. In particular, the adversary is associated
with a memory whose state is denoted by $M^a(k)$. If a replay attack
is launched at time~$k$, the adversary executes the following: $(i)$
erases the data sent from the operator; $(ii)$ sends previous data
stored in her memory, $M^a(k)$, to the actuator; $(iii)$ maintains the
state of the memory; i.e., $M^a(k+1) = M^a(k)$. In this case, we use
$\vartheta(k)=1$ to indicate the occurrence of a replay attack. If the
attacker keeps idle at time~$k$, then data is intercepted, say $\Upsilon$,
sent from the operator to plant, and stored it in memory; i.e.,
$M^a(k+1) = \Upsilon$. In this case, $\vartheta(k) = 0$ and $u$ is
successfully received by the actuator. Without loss of any generality,
we assume that $\vartheta(-1) = \vartheta(0) = 0$.

We now define the variable $s(k)$ with initial state $s(0) = s(-1) =
0$ to indicate the consecutive number of the replay attacks. If
$\vartheta(k) = 1$, then $s(k) = s(k-1) + 1$; otherwise, $s(k) =
0$. So, the quantity $s(k)$ represents the number of consecutive
attacks up to time~$k$.

A replay attack requires spending certain amount of energy. We assume
that the energy of the adversary is limited, and adversary~$i$ is only
able to launch at most $S\geq1$ consecutive attacks. This assumption
is formalized as follows:
\begin{assumption}\textbf{(Maximum number of consecutive attacks)}
There is an integer $S\geq1$ such that $\max_{k\geq0}s(k)\leq
S$.\oprocend\label{asm3}
\end{assumption}

\begin{figure}[h]
  \centering
  \includegraphics[width=0.5\linewidth]{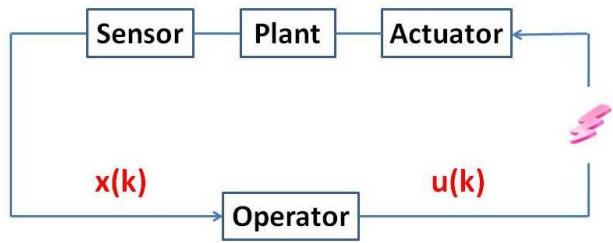}
  \caption{The closed-loop system} \label{fig_system}
\end{figure}


Replay attacks have been successfully used by the virus attack of Stuxnet~\cite{NF-LOM-EC:11,YM-TK-KB-DD-LH-AP-BS:12}. This class of attacks can be easily detected by attaching a time stamp to each control command. In the remainder of the paper, we assume that the attacks can always be detected and focus on the design and analysis of resilient controllers against them.

%

\subsection{Attack-resilient receding-horizon control law}

Here we propose a variation of the receding-horizon control in;
e.g.~\cite{DQM-JBR-CVR-POMS:00,GPL-JXM-DR-SCC:06}, to deal with the replay attacks. Our
\textbf{attack-resilient receding-horizon control law}, (for short,
AR-RHC) is stated in Algorithm~\ref{ta:algo}. In particular, at each time instant, the plant stores the whole control sequence which will be used in response to future attacks. The terminal state cost is chosen to coincide with the running state cost. This is instrumental for the analysis of performance degradation in Theorem~\ref{the1}.

\begin{algorithm}[htbp]
  \caption{The attack-resilient receding-horizon
  control law} \label{ta:algo}

\begin{algorithmic}
  \REQUIRE The following steps are first performed by the operator:
\end{algorithmic}
  \begin{algorithmic}[1]

  \STATE Choose $K$ so that $\sigma(\bar{A})$ is strictly inside the
  unit circle.

  \STATE Choose $\bar{Q} = \bar{Q}^T>0$ and obtain $\bar{P}$ by
  solving the following Lyapunov equation: \begin{align}\bar{A}^T
    \bar{P}\bar{A} - \bar{P} = - \bar{Q}.\label{Ly-eq}\end{align}

  \STATE Choose a constant $c>0$ such that
  $X_0\triangleq\{x\in\real^n\;|\;\|x\|^2_{\bar{P}}\leq c\}\subseteq
  X$.
\end{algorithmic}

\begin{algorithmic}
  \ENSURE At each $k \ge 0$, the operator, actuator and sensor execute
  the following steps:
\end{algorithmic}
\begin{algorithmic}[1]
    \STATE The operator solves the following $N$-horizon quadratic
    program, namely $N$-QP, parameterized by $x(k)\in
    X$: \begin{align*} \min_{\inputseq(k) \in \real^{m\times
          N}}&\sum_{\tau=0}^{N-1}
      \big(\|x(k+\tau|k)\|^2_P+\|u(k+\tau|k)\|^2_Q\big) +
      \|x(k+N|k)\|^2_P,\\ {\rm{s.t.}}& \quad x(k+\tau+1|k) = A x(k+\tau|k)
      + B u(k+\tau|k),\nnum\\ & \quad x(k|k) = x(k),\quad
      x(k+\tau+1|k)\in X_0, \quad u(k+\tau|k)\in U, \quad 0 \le \tau
      \le N-1,
\end{align*}
obtains the solution $\inputseq(k) \triangleq [u(k|k), \cdots,
  u(k+N-1|k)]$, and sends it to the actuator.


  \STATE If $s(k) = 0$, the actuator receives $\inputseq(k)$, sets $M^p(k+1) = \inputseq(k)$, implements $u(k|k)$, and the sensor sends $x(k+1)$ to the
  operator. If $s(k) \geq 1$, the actuator implements $u(k|k-s(k))$ in
  $M^p(k)$, sets $M^p(k+1) = M^p(k)$, and the sensor sends $x(k+1)$ to
  the operator.

  \STATE Repeat for $k = k+1$.
\end{algorithmic}
\end{algorithm}


\begin{table}[ht]
\caption{Main notations used in the following sections}
\centering
\begin{tabular}{|c|l|}
  \hline
  $\subscr{\lambda}{max}(R)$ (resp. $\subscr{\lambda}{min}(R)$) & $
\begin{array}{l}
  \text{the maximum (resp. minimum) eigenvalue of matrix $R$}
\end{array}
$
\tabularnewline
  \hline
  $\lambda \triangleq 1 - \displaystyle{\frac{\subscr{\lambda}{max}(\bar{Q})}{\subscr{\lambda}{min}(\bar{P})}}$ & $
\begin{array}{l}
  \text{positive constant, $\lambda \in (0,1)$, see~\cite{TM-NF-MK:82},}\\ \text{defined with $\bar{Q}$,
    $\bar{P}$ introduced in AR-RHC}
\end{array}
$
\tabularnewline
\hline
$\phi_N \triangleq
\displaystyle{ \frac{\subscr{\lambda}{max}(\bar{P})\subscr{\lambda}{max}(P +
    K^TQK)}{\subscr{\lambda}{min}(\bar{P})}\frac{(1-\lambda^{N+1})}{1-\lambda}}$  & $
\begin{array}{l}
\text{positive constant defined for all $N>0$,}\\\text{with $\bar{Q}$,
    $\bar{P}$, and $K$ introduced in AR-RHC}
\end{array}
$
\tabularnewline
\hline
$  \phi_{\infty} \triangleq \displaystyle{
  \frac{\subscr{\lambda}{max}(\bar{P})\subscr{\lambda}{max}(P +
    K^TQK)}{\subscr{\lambda}{min}(\bar{P})(1 - \lambda)}}$ &
$
\begin{array}{l}
  \text{positive constant defined with $\bar{Q}$,
    $\bar{P}$, and $K$ }\\\text{introduced in AR-RHC}
\end{array}
$
\tabularnewline
\hline
$\alpha_N \triangleq \displaystyle{\frac{\subscr{\lambda}{max}(K^TQK +
    \bar{A}^TP\bar{A})}{\subscr{\lambda}{min}(P)}
  \times\prod_{\kappa=0}^{N-1}(1-\frac{\subscr{\lambda}{min}(P)}{\phi_{\kappa+1}})} $ &
$
\begin{array}{l}
  \text{positive constant defined for all $N>0$,}\\\text{with $\bar{A}$ and
    $K$ introduced in AR-RHC, and $\lambda$ introduced here}
\end{array}
$
\tabularnewline
\hline
  $\rho_N \triangleq(1+\alpha_{N-1})
    (1 - \frac{\subscr{\lambda}{min}(P)}{\phi_N})$ & $\begin{array}{l}\text{a discount factor}\end{array}$
\tabularnewline
\hline
  $W(x) \triangleq \|x\|^2_{\bar{P}}$ & $
\begin{array}{l}
  \text{matrix $\bar{P}$ is the solution to Lyapunov equation~\eqref{Ly-eq}}
\end{array}
$
\tabularnewline
\hline
  $V_N$ & $
\begin{array}{l}
  \text{the optimal value function of $N$-QP}
\end{array}
$
\tabularnewline
\hline
\end{tabular}\label{ta:thms}
\end{table}

In what follows, we present the results characterizing the stability
and infinite-horizon cost induced by AR-RHC. See
Table~\ref{ta:thms}, for the main notations employed, and Section~\ref{sec:analysis} for the complete proof. Notice that the following property holds:
\begin{align*}
  \frac{\subscr{\lambda}{min}(P)}{\phi_{N}} =
  \frac{\subscr{\lambda}{min}(P)}{\subscr{\lambda}{max}(P+K^TQK)}
  \frac{\subscr{\lambda}{min}(\bar{P})}{\subscr{\lambda}{max}(\bar{P})}\frac{(1-\lambda)}{(1-\lambda^{N+1})}
  < 1.
\end{align*}
where $\lambda$ and $\phi_N$ are defined in Table~\ref{ta:thms}. On the
other hand, for $\alpha_N$ in Table~\ref{ta:thms}, $\alpha_N\searrow0$ as $N\nearrow+\infty$, and $\phi_N$ is strictly increasing in $N$ and upper bounded by $\phi_{\infty}$. Then, given any integer $S\geq1$, there is a smallest integer $N^*(S)\geq S$ such that for all $N\geq N^*(S)$, it holds that: \begin{align*}\gamma_{N,S}\triangleq (1 - \frac{\subscr{\lambda}{min}(P)}{\phi_{\infty}})
\max\{(1+\alpha_{N-S-1}),(1+\alpha_{N-1})
\prod_{\ell=N-S}^{N-1}(1+\alpha_{\ell})\} < 1.\end{align*} Analogously, given any integer $S\geq1$, there is a smallest integer $\hat{N}^*(S)\geq S$ such that for all $N\geq \hat{N}^*(S)$, it holds that \begin{align*}\hat{\gamma}_{N,S}&\triangleq (1 - \frac{\subscr{\lambda}{min}(P)}{\phi_{\infty}})^2(1+\alpha_{N-1})(1+\alpha_{N-2})\\
&\times\big(\max_{s\in\{1,\cdots,S\}}\prod_{\ell=2}^{s}(1 - \frac{\subscr{\lambda}{min}(P)}{\phi_{\infty}})(1+\alpha_{N-\ell-1})\big)
\prod_{\ell=N-S}^{N-1}(1+\alpha_{\ell}) < 1.\end{align*} One can easily verify $\hat{N}^*(S) \leq N^*(S)$. The following theorem characterizes the stability
and infinite-horizon cost of system~\eqref{e10} under AR-RHC where $V_{\ell}(x)$ represents the value of the $\ell$-QP parameterized by $x\in X$.


\begin{theorem}\textbf{(Stability and infinite-horizon cost)} Let Assumptions~\ref{asm2},~\ref{asm5} and~\ref{asm3} hold.
    \begin{enumerate}
    \item \textbf{(Exponential stability)} Suppose $N\geq \max\{N^*(S)+1,S+1\}$. Then system~\eqref{e10} under AR-RHC is exponentially stable when starting from $X_0$ with a rate of $\gamma_{N,S}$ in the sense that $V_{N-s(k-1)}(x(k))\leq \gamma_{N,S}^kV_N(x(0))$. In addition, the infinite-horizon cost of system~\eqref{e10} under AR-RHC is bounded above by $\frac{1}{1-\gamma_{N,S}}V_N(x(0))$.
  \item \textbf{(Asymptotic stability)} If $N\geq \max\{\hat{N}^*(S)+1,S+1\}$, then system~\eqref{e10} under AR-RHC is asymptotically stable when starting from $X_0$.
    \end{enumerate} \label{the1}
\end{theorem}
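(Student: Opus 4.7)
The plan is to treat the sequence $\{V_{N-s(k-1)}(x(k))\}_{k\geq 0}$ as a time-varying Lyapunov function along the closed loop and to derive one-step inequalities that, when chained over a worst-case window containing up to $S$ consecutive attacks, yield the contraction factors $\gamma_{N,S}$ and $\hat\gamma_{N,S}$ appearing in the statement. Exponential stability and the infinite-horizon cost bound will then follow from a geometric-series telescoping, while asymptotic stability will follow from a slightly sharper grouping of the cycle factors.

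The first ingredient is the pair of comparison bounds $\lambda_{\min}(P)\|x\|^2 \leq V_N(x) \leq \phi_N W(x)$ on $X_0$, where $W(x)=\|x\|^2_{\bar P}$. The lower bound is immediate from the nonnegativity of the other stage costs. For the upper bound, plugging the auxiliary policy $u=Kx$ into the $N$-QP produces (by Assumption~\ref{asm5}) a feasible candidate on $X_0$, and the Lyapunov identity $\bar A^T\bar P\bar A - \bar P=-\bar Q$ gives the geometric decay $W(\bar A^\tau x)\leq \lambda^\tau W(x)$, whose summed running costs collapse into the constant $\phi_N$. Invariance of $X_0$ under $\bar A$ (a by-product of the same identity) guarantees that the terminal constraint $x(k+\tau+1|k)\in X_0$ is always met by the auxiliary-controller extension used to construct feasible sequences.

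The second ingredient is the one-step analysis, split on $\vartheta(k)$. In a non-attack step the standard receding-horizon argument (shift the optimal sequence by one and append one auxiliary-controller action) gives $V_N(x(k+1))\leq V_N(x(k))-\|x(k)\|^2_P \leq (1-\lambda_{\min}(P)/\phi_N)V_N(x(k))$. In an attack step the actuator implements $u(k|k-s(k))$ from the stored sequence, so $x(k+1) = x(k+1|k-s(k))$ coincides with the operator's earlier prediction; the remaining tail of that stored sequence, extended by one $K$-step, is then feasible for the $(N-s(k))$-QP at $x(k+1)$, and comparing its cost with $V_{N-s(k)+1}(x(k))$ yields the multiplicative amplification $(1+\alpha_{N-s(k)-1})$ that absorbs the terminal-cost overcounting and the auxiliary extension. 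This produces $V_{N-s(k)}(x(k+1))\leq (1+\alpha_{N-s(k)-1})V_{N-s(k)+1}(x(k))$. Chaining one non-attack inequality with an attack burst of up to $S$ steps and maximizing over the burst length gives the per-cycle contraction by $\gamma_{N,S}$, whence $V_{N-s(k-1)}(x(k))\leq \gamma_{N,S}^k V_N(x(0))$; exponential stability then follows from the lower bound on $V$, and the infinite-horizon cost bound follows by summing the geometric series obtained from $\|x(k)\|^2_P+\|u(k)\|^2_Q \leq V_{N-s(k-1)}(x(k))$.

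For asymptotic stability (part~2), the same per-cycle product is reorganized as two non-attack contractions combined with an attack burst of length $s\in\{1,\dots,S\}$, which gives the sharper factor $\hat\gamma_{N,S}\leq\gamma_{N,S}$ and the weaker horizon requirement $\hat N^*(S)\leq N^*(S)$; once $\hat\gamma_{N,S}<1$, the same telescoping yields $V_{N-s(k-1)}(x(k))\to 0$ and hence $x(k)\to 0$. The main obstacle is the bookkeeping of the attack burst: tracking how the horizon index $N-s(\cdot)$ shrinks while the gains $(1+\alpha_\ell)$ accumulate multiplicatively, and verifying that the composed per-cycle factor is strictly below one for all sufficiently large $N$. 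That verification is precisely what motivates the somewhat intricate definitions of $\phi_N$, $\alpha_N$, $\gamma_{N,S}$ and $\hat\gamma_{N,S}$ in Table~\ref{ta:thms}.
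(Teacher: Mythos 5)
Your overall architecture coincides with the paper's: the time-varying Lyapunov candidate $V_{N-s(k-1)}(x(k))$, the quadratic bounds via $\phi_N$ on the invariant set $X_0$, the identification (via Bellman's principle) of the post-attack state with the operator's earlier prediction, and the diminishing-ratio constants $\alpha_N$. There are, however, two genuine gaps. The first is your non-attack one-step inequality $V_N(x(k+1))\leq V_N(x(k))-\|x(k)\|^2_P$. This is false here: the terminal cost is deliberately chosen to equal the running state cost $\|x(k+N|k)\|^2_P$ rather than a control-Lyapunov terminal cost, so the shift-and-append candidate costs $V_N(x(k))-\|x(k)\|_P^2-\|u(k)\|_Q^2+\|Kx(k+N|k)\|_Q^2+\|\bar A x(k+N|k)\|_P^2$, and the appended terms are nonnegative and cannot be absorbed. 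What Bellman's principle actually yields is $V_{N-1}(x(k+1))\leq V_N(x(k))-\|x(k)\|_P^2$; restoring the horizon to $N$ costs the factor $(1+\alpha_{N-1})$, which is precisely the content of the paper's diminishing-ratios lemma and is why the correct one-step factor is $\rho_N=(1+\alpha_{N-1})(1-\subscr{\lambda}{min}(P)/\phi_N)$ and why the requirement $N\geq N^*(S)$ arises at all --- under your clean decrease, every horizon $N$ would contract in attack-free operation. You invoke the $(1+\alpha)$ amplification only in attack steps; it is present in every step.

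The second gap is the passage from a per-cycle contraction to the claimed rate. Chaining one non-attack step with a burst of up to $S$ attacks gives, at best, a factor $\gamma_{N,S}$ over a window of length up to $S+1$, which yields decay on the order of $\gamma_{N,S}^{k/(S+1)}$ --- not the per-step bound $V_{N-s(k-1)}(x(k))\leq\gamma_{N,S}^k V_N(x(0))$ asserted in the theorem. The paper proves the stronger per-step statement by a four-case analysis on the pair $(\vartheta(k-1),\vartheta(k))$, showing that \emph{every single} transition satisfies $V_{N-s(k)}(x(k+1))\leq\gamma_{N,S}V_{N-s(k-1)}(x(k))$; the worst case is the step at which a burst ends, where reverting the horizon index from $N-s(k-1)$ to $N$ incurs the factor $\prod_{\ell=N-s(k-1)}^{N-1}(1+\alpha_{\ell})$, i.e., the second term in the max defining $\gamma_{N,S}$. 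The cycle-wise grouping you describe is the right idea for Part 2 --- it is exactly how $\hat\gamma_{N,S}$ and the weaker requirement $\hat{N}^*(S)\leq N^*(S)$ arise --- but it does not deliver the exponential rate claimed in Part 1.
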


\begin{remark} AR-RHC with Theorem~\ref{the1} can be readily extended to several scenarios, including DoS attacks, measurement attacks and the combinations of such attacks. If the adversary launches a DoS attack on control commands, the actuator receives nothing and then performs Step 3 in AR-RHC. The adversary may produce the replay attacks on the measurements sent from the sensor to the operator. If this happens, then the operator does not send anything to the actuator and the actuator performs Step 3 in AR-RHC.\oprocend
\end{remark}

\section{Discussion and simulations}\label{sec:discussion}

\subsection{Extensions}

AR-RHC with Theorem~\ref{the1} can be readily extended to several scenarios, including DoS attacks, measurement attacks and the combinations of such attacks. If the adversary launches a DoS attack on control commands, the actuator receives nothing and then performs Step 3 in AR-RHC. The adversary may produce the replay attacks on the measurements sent from the sensor to the operator. If this happens, then the operator does not send anything to the actuator and the actuator performs Step 3 in AR-RHC.

\subsection{Explicit upper bounds on $N^*(S)$ and $\hat{N}^*(S)$}

Consider $S\geq2$ and let $\chi \triangleq (1 -
\frac{\subscr{\lambda}{min}(P)}{\phi_{\infty}})$ and $\psi \triangleq
\frac{\subscr{\lambda}{max}(K^TQK
  +\bar{A}^TP\bar{A})}{\subscr{\lambda}{min}(P)}$. Note that
\begin{align}
\gamma_{N,S} &\leq (1 - \frac{\subscr{\lambda}{min}(P)}{\phi_{\infty}})
(1+\alpha_{N-1})\prod_{\ell=N-S-1}^{N-1}(1+\alpha_{\ell})\nnum\\
&\leq \chi(1+\alpha_{N-S-1})^{S+2} \leq \beta_{N,S}
\triangleq \chi(1+\psi\chi^{N-S-1})^{S+2}.\label{e12}
\end{align}
So it suffices to find $N$ such that $\beta_{N,S} < 1$. The relation
$\beta_{N,S} < 1$ is equivalent to the following:
\begin{align*}
N-S-1 > \frac{\ln\big(\frac{1}{\psi}(\chi^{-\frac{1}{S+2}}-1)\big)}{\ln
  \chi} = \frac{\ln(\chi^{-\frac{1}{S+2}}-1)-\ln\psi}{\ln
  \chi}.
\end{align*}

Hence, an explicit upper bound on $N^*(S)$ is $\Pi_E(S) \triangleq S + 1 +
\frac{\ln(\chi^{-\frac{1}{S+2}}-1)-\ln\psi}{\ln \chi}$.

We now move to find an explicit upper bound on $\hat{N}^*(S)$. Note that \begin{align*}\hat{\gamma}_{N,S}&\leq (1 - \frac{\subscr{\lambda}{min}(P)}{\phi_{\infty}})^2(1+\alpha_{N-1})(1+\alpha_{N-2})
\big(\max_{s\in\{1,\cdots,S\}}\prod_{\ell=2}^{s}(1 - \frac{\subscr{\lambda}{min}(P)}{\phi_{\infty}})(1+\alpha_{N-\ell-1})\big)
\prod_{\ell=N-S}^{N-1}(1+\alpha_{\ell})\\
&\leq (1 - \frac{\subscr{\lambda}{min}(P)}{\phi_{\infty}})^{S+1}(1+\alpha_{N-1})(1+\alpha_{N-2})
(1+\alpha_{N-S-1})^{S-1}\prod_{\ell=N-S}^{N-1}(1+\alpha_{\ell})\\
&\leq (1 - \frac{\subscr{\lambda}{min}(P)}{\phi_{\infty}})^{S+1}(1+\alpha_{N-S-1})^{2S+1} = \chi^{S+1}(1+\psi\chi^{N-S-1})^{2S+1}.\end{align*}

So, an explicit upper bound on $\hat{N}^*(S)$ is $\Pi_A(S) \triangleq S + 1 +
\frac{\ln(\chi^{-\frac{S+1}{2S+1}}-1)-\ln\psi}{\ln \chi}$. This pair of
upper bounds clearly demonstrate that a higher computational
complexity; i.e., a larger $N$, is caused by a larger $S$, indicating
that the adversary is less energy constrained. On the other hand, the
second term in $\Pi_A(S)$ approaches a constant as $S$ goes to
infinity. So $\Pi_A(S)$ can be upper bounded by an affine
function. However, the second term in $\Pi_E(S)$ dominates when $S$ is
large. That is,  exponential stability demands a much
higher cost than asymptotic stability when $S$ is large.

\subsection{A reverse scenario}

Reciprocally, for any horizon $N\geq1$, there is a largest integer
$S^*(N) \leq N-1$ (resp. $\hat{S}^*(N) \leq N-1$) such that for all
$S\leq S^*(N)$ (resp. $S\leq \hat{S}^*(N)$), it holds that
$\gamma_{N,S} < 1$ (resp. $\hat{\gamma}_{N,S} <
1$). Theorem~\ref{the1} still applies to this reverse scenario and
characterizes the ``security level'' or ``amount of resilience'' that
the proposed receding-horizon control algorithm possesses.

\subsection{Optimal resilience management}

The analysis of Theorem~\ref{the1} quantifies the cost and constraints
that allow the AR-RHC algorithm to work despite consecutive attacks
under limited computation capabilities. These metrics can be used for optimal resilience management of a network as follows.

As~\cite{SA-GAS-SSS:10}, we consider a set of players $V\triangleq
\{1,\cdots,N\}$ where the players share a communication network and each of them is associated with a decoupled dynamic system:
\begin{align}
x_i(k+1) = A_i x_i(k) + B_i u_i(k).\label{e11}
\end{align}
Each player~$i$ implements his own AR-RHC with horizon
$N_i$. The notations in the previous sections can be defined analogously for each player and the set of the notations of player~$i$ will be indexed by~$i$.

By~\eqref{e12}, we associate player~$i$ with the following cost
function:
\begin{align}
\mathcal{C}_i(M) =
\big(1+\psi_i\chi_i^{N_i-\mathcal{S}(\textbf{1}^TM)}\big)^{\mathcal{S}(\textbf{1}^TM)+1}
+\frac{1}{2}a_iM_i^2,\label{e20}\end{align}
where $M_i\in[M_{i,\min},M_{i,\max}]\subset\real_{>0}$ is the security
investment of player~$i$, $a_i\in\real_{>0}$ is a weight on the security
cost the and $\textbf{1}$ is the vector with $N$ ones. The non-negative real value
$\mathcal{S}(\textbf{1}^TM)$ represents the security level given the
investment vector $M$ of all players, where $\mathcal{S} :
\real_{\geq0}\rightarrow\real_{\geq0}$ is convex, non-decreasing, and
smooth. We assume that each player has a fixed computational power,
and so $N_i$ is fixed. The players need to make the investment such
that
\begin{align}
\mathcal{S}(\textbf{1}^TM)\leq
  \min_{i\in V}S_i^*(N_i).\label{e13}
\end{align}

\begin{remark} Note that $S$ is an integer in~\eqref{e12}. In~\eqref{e20} and~\eqref{e13},
we use the real value of $\mathcal{S}(\textbf{1}^TM)$ as an approximation.\label{rem2}\oprocend
\end{remark}

We now compute the first-order partial derivative
of $\mathcal{C}_i$ as follows:
\begin{align*}
\frac{\partial \mathcal{C}_i}{\partial M_i} &=
-\ln(1+\psi_i\chi_i^{N_i-\mathcal{S}(\textbf{1}^TM)})
(1+\psi_i\chi_i^{N_i-\mathcal{S}(\textbf{1}^TM)})^{\mathcal{S}(\textbf{1}^TM)+1}(\ln\chi_i)\psi_i
\chi_i^{N_i-\mathcal{S}_i(M)}(\frac{\partial\mathcal{S}}{\partial
  y})^2 + a_iM_i\end{align*} where we use the shorthand $y\triangleq
\textbf{1}^TM$. With this, we further
derive the second-order partial derivative as
follows: \begin{align*}\frac{\partial^2 \mathcal{C}_i}{\partial M_i^2}
  &= \psi_i^2(\ln\chi_i)^2\chi_i^{2(N_i-\mathcal{S}_i(\textbf{1}^TM))}
  (1+\psi_i\chi_i^{N_i-\mathcal{S}(\textbf{1}^TM)})^{\mathcal{S}(\textbf{1}^TM)}
  (\frac{\partial\mathcal{S}}{\partial
    y})^3 + a_i\\ &+(\ln(1+\psi_i\chi_i^{N_i-\mathcal{S}(\textbf{1}^TM)}))^2
  (1+\psi_i\chi_i^{N_i-\mathcal{S}(\textbf{1}^TM)})^{\mathcal{S}(\textbf{1}^TM)+1}
  (\psi_i\ln\chi_i
  \chi_i^{N_i-\mathcal{S}_i(\textbf{1}^TM)})^2(\frac{\partial
    \mathcal{S}}{\partial
    y})^4\\ &+\ln(1+\psi_i\chi_i^{N_i-\mathcal{S}(\textbf{1}^TM)})
  (1+\psi_i\chi_i^{N_i-\mathcal{S}(\textbf{1}^TM)})^{\mathcal{S}(\textbf{1}^TM)+1}
  \psi_i(\ln\chi_i)^2
  \chi_i^{N_i-\mathcal{S}_i(\textbf{1}^TM)}(\frac{\partial
    \mathcal{S}}{\partial
    y})^3\\ &+2(\ln(1+\psi_i\chi_i^{N_i-\mathcal{S}(\textbf{1}^TM)}))^2
  (1+\psi_i\chi_i^{N_i-\mathcal{S}(\textbf{1}^TM)})^{\mathcal{S}(\textbf{1}^TM)+1}
  \psi_i(-\ln\chi_i)
  \chi_i^{N_i-\mathcal{S}_i(\textbf{1}^TM)}\frac{\partial
    \mathcal{S}}{\partial y}\frac{\partial^2 \mathcal{S}}{\partial
    y^2}.\end{align*}

Recall that $\chi_i\in(0,1]$ and $\mathcal{S}$ is non-decreasing and
  convex. So $\frac{\partial^2 \mathcal{C}_i}{\partial M_i^2} \geq
  0$ and $\mathcal{C}_i$ is convex in $M_i$. Analogously, one can show that $\mathcal{C}_i$ is convex in $M$.

\subsubsection{Competitive resource allocation scenario}

Consider a \emph{resilience management game}, where each player~$i$
minimizes his cost $\mathcal{C}_i(M)$, subject to the common
constraint~\eqref{e13} and his private constraint
$M_i\in[M_{i,\min},M_{i,\max}]\subset\real_{>0}$. Since
$\mathcal{C}_i$ and $\mathcal{S}$ are convex in $M_i$, then the game is a
generalized convex game. The distributed algorithms in~\cite{MZ-EF:12}
can be directly utilized to numerically compute a Nash equilibrium of the
resilience management game, and the algorithms in~\cite{MZ-EF:12} are able to tolerate transmission delays and packet dropouts.

\begin{remark} The paper~\cite{SA-GAS-SSS:10} considers a set of identical and independent networked control systems and each of them aims to solve an infinite-horizon LQG
problem. The authors study a different security game where the decisions of each player are binary, participating in the security investment or not.\oprocend\label{rem1}
\end{remark}

\subsubsection{Cooperative resource allocation scenario}

Consider a \emph{resilience management optimization problem}, where
the players aim to collectively minimize $\sum_{i\in V}\mathcal{C}_i(M)$,
subject to the global constraint~\eqref{e13} and the private
constraint $M_i\in[M_{i,\min},M_{i,\max}]\subset\real_{>0}$. Since
$\mathcal{C}_i$ and $\mathcal{S}$ are convex, then the problem is a
convex program. The distributed algorithms in~\cite{MZ-SM:09c} can be
directly exploited to numerically compute a global minimizer of this problem, and the algorithms in~\cite{MZ-SM:09c} are robust to the dynamic changes of inter-player topologies.

\subsection{Simulations}

In this section, we provide a numerical example to illustrate the performance of our algorithm. The set of system parameters are given as follows:
\begin{align*}
&A = \left[\begin{array}{cc}
      2 & 1 \\
      1 & 2
    \end{array}\right],\quad B = \left[\begin{array}{c}
      2 \\
      1
    \end{array}\right],\quad K = [-3.25\;\;-3],\quad P = I, \quad Q = 1,\\
&\bar{Q} = I,\quad \bar{P} = \left[\begin{array}{cc}
      25.6667 & 13.3333 \\
      13.3333 & 8.2963
    \end{array}\right],\quad c = 100,\quad u_{\max} = 500.
\end{align*}

Figure~\ref{fig_simulation} shows the temporal evolution of
$\|x(k)\|^2$ under three attacking horizons $S = 0, 2, 5$. One can see
that a larger $S$ induces a longer time to converge, and larger
oscillation before reaching the equilibrium. In our simulations, a
smaller horizon $N=15$ than the one determined theoretically is
already sufficient to achieve system stabilization.

\begin{figure}[h]
  \centering
  \includegraphics[width=0.5\linewidth]{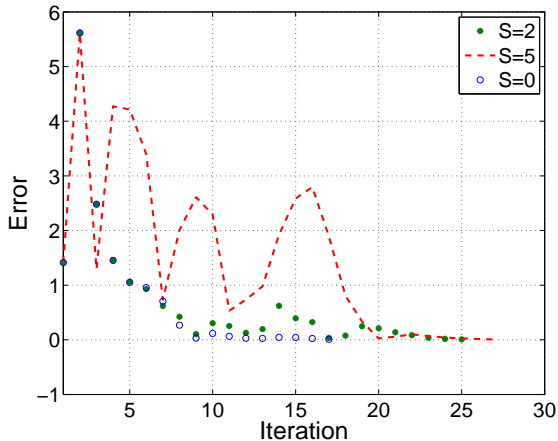}
  \caption{The trajectories of $\|x(k)\|^2$ under the attack-resilient
    receding-horizon control algorithm} \label{fig_simulation}
\end{figure}

\section{Conclusions}

In this paper, we have studied a resilient control problem where a
linear dynamic system is subject to the replay and DoS attacks. We
have proposed a variation of the receding-horizon control law for the
operator and analyzed system stability and performance
degradation. We have also studied a class of competitive
  (resp. cooperative) resource allocation problems for resilient
  networked control systems. Extension to multi-agent systems will be
  considered in the future.

\section{Appendix: Technical proofs}\label{sec:analysis}


The proofs toward Theorem~\ref{the1} are collected in this section. In
particular, the proofs for the intermediate lemmas are based on the
corresponding results in our previous paper~\cite{MZ-SM:ACC11} on
deception attacks. The proofs for the main theorem are new and not
included in~\cite{MZ-SM:ACC11}. In the proof of Theorem~\ref{the1}, we
choose $V_{N-s(k-1)}(x(k))$ as a Lyapunov function candidate. To
analyze its convergence, we first establish several instrumental
properties of $V_N$, including  monotonicity, diminishing rations
with respect to $N$ and decreasing property.

Recall the definitions of $\lambda$, $\alpha_N$, $\phi_N$, and
$\phi_{\infty}$ summarized in Table~\ref{ta:thms}. It follows
from~\cite{TM-NF-MK:82} that $\lambda\in(0,1)$, and clearly, $1 \le
\phi_N\leq\phi_{\infty}$ for any $N\in {\mathbb{Z}}_{>0}$. Observe
that the following holds for any
$\kappa\in{\mathbb{Z}}_{>0}$: \begin{align*}
  \frac{\subscr{\lambda}{min}(P)}{\phi_{\kappa+1}} =
  \frac{\subscr{\lambda}{min}(P)}{\subscr{\lambda}{max}(P+K^TQK)}
  \frac{\subscr{\lambda}{min}(\bar{P})}{\subscr{\lambda}{max}(\bar{P})}
  \frac{1-\lambda}{1-\lambda^{{\kappa}+2}} \geq
  \frac{\subscr{\lambda}{min}(P)}{\subscr{\lambda}{max}(P+K^TQK)}
  \frac{\subscr{\lambda}{min}(\bar{P})}{\subscr{\lambda}{max}(\bar{P})}(1-\lambda)\in(0,1).
\end{align*} This ensures the monotonicity of $\alpha_N$ and,
moreover, that $\alpha_N\searrow0$ as $N\nearrow+\infty$.

We show the forward invariance property of system~\eqref{e10} in $X_0$
under $Kx$.

\begin{lemma}[Forward invariance in $X_0$]
  The set $X_0$ is forward invariant for system~\eqref{e10} under the auxiliary controller $Kx$ with the control constraint $U$; i.e., for any $x \in X_0$, it holds that $u = Kx \in U$ and
  $\bar{A}x \in X_0$.
\label{lem2}
\end{lemma}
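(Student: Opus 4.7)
The plan is to verify the two required conditions separately, using Assumption~\ref{asm5} for the input constraint and the Lyapunov equation~\eqref{Ly-eq} for the state invariance.

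First I would handle the input-constraint statement $Kx\in U$. By the construction of $X_0$ in Step~3 of AR-RHC we have $X_0\subseteq X$, so any $x\in X_0$ belongs to $X$. Assumption~\ref{asm5} then immediately gives $Kx\in U$, which takes care of the admissibility of the auxiliary controller on $X_0$.

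The substantive part is showing $\bar{A}x\in X_0$, i.e.\ $\|\bar{A}x\|_{\bar{P}}^2\le c$ whenever $\|x\|_{\bar{P}}^2\le c$. I would compute
\begin{align*}
\|\bar{A}x\|_{\bar{P}}^2 = x^T \bar{A}^T \bar{P}\bar{A}\, x = x^T(\bar{P}-\bar{Q})x = \|x\|_{\bar{P}}^2 - \|x\|_{\bar{Q}}^2,
\end{align*}
where the middle equality uses the Lyapunov equation~\eqref{Ly-eq}. Since $\bar{Q}\succ 0$, the term $\|x\|_{\bar{Q}}^2$ is nonnegative, giving $\|\bar{A}x\|_{\bar{P}}^2\le\|x\|_{\bar{P}}^2\le c$, so $\bar{A}x\in X_0$ as desired.

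There is no genuine obstacle here; the argument is a direct one-line consequence of the Lyapunov equation together with the choice of $c$ in Step~3 that embeds the sublevel set $X_0$ into $X$. The only points to be careful about are (i) using that $\bar{P}$ is well-defined and positive-definite (which is guaranteed by Assumption~\ref{asm2} and the choice of $K$ stabilizing $\bar{A}$), and (ii) that the constant $c$ was selected precisely so that $X_0\subseteq X$, so the implication $x\in X_0\Rightarrow Kx\in U$ actually follows from Assumption~\ref{asm5}.
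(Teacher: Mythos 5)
Your proposal is correct and follows essentially the same route as the paper: both use the Lyapunov equation~\eqref{Ly-eq} to show $\|\bar{A}x\|_{\bar{P}}^2=\|x\|_{\bar{P}}^2-\|x\|_{\bar{Q}}^2\le\|x\|_{\bar{P}}^2$, so the sublevel set $X_0$ is invariant, and both invoke $X_0\subseteq X$ together with Assumption~\ref{asm5} for $Kx\in U$. No gaps.
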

\begin{proof}The differences of $W$ along the
  trajectories of the dynamics~\eqref{e10} under $u(k) =
  Kx(k)$, $x(k) = x$ can be characterized by:
  \begin{align}
    & W(x(k+1)) - W(x) = \|\bar{A}x(k+1)\|^2_{\bar{P}} -
    \|x(k)\|^2_{\bar{P}} = - \|x\|^2_{\bar{Q}} \leq
    -\subscr{\lambda}{min}(\bar{Q})\|x\|^2,\label{e42}
\end{align}
where $W(x)$, $\bar{A}$, $\bar{P}$ and $\bar{Q}$ are given in Table~\ref{ta:thms}, and in the second equality we apply the Lyapunov equation~\eqref{Ly-eq}. Since
$\bar{Q} > 0$, then $W(x(k+1)) \leq W(x)$. Since $x$ belongs to
$X_0$, so does $x(k+1)$. Since $X_0\subseteq X$, we know
that $u(k)\in U$ by Assumption~\ref{asm5}. The forward invariance
property of $X_0$ for system~\eqref{e10} follows.
\end{proof}

On the other hand, one can see that the $N$-QP parameterized by $x\in X_0$ has at
least one solution generated by the auxiliary controller.

\begin{lemma}[Feasibility of the $N$-QP] For any
  $x \in X_0$, consider system~\eqref{e10} with $x(k|k) = x$
  and $u(k+\tau|k) = Kx(k+\tau|k)$, for $0\leq \tau \leq N-1$. Then,
  ${\inputseq}(k)$ is a feasible solution to the $N$-QP parameterized by $x(k)\in X_0$.\label{lem3}
\end{lemma}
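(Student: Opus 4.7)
The plan is to deduce feasibility directly by induction on the prediction index $\tau$, leveraging the forward invariance of $X_0$ established in Lemma~\ref{lem2}. The candidate solution is simply the one generated by closing the loop with the auxiliary controller $Kx$, so the dynamic constraint $x(k+\tau+1|k) = A x(k+\tau|k) + B u(k+\tau|k)$ holds by construction, and the initial condition $x(k|k) = x \in X_0$ holds by assumption. The only things to verify are the state constraint $x(k+\tau+1|k) \in X_0$ and the input constraint $u(k+\tau|k) \in U$ for $0 \le \tau \le N-1$.

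First, I would set up an induction on $\tau \in \{0, 1, \dots, N-1\}$, with induction hypothesis that $x(k+\tau|k) \in X_0$. The base case $\tau = 0$ is immediate since $x(k|k) = x \in X_0$. For the inductive step, assuming $x(k+\tau|k) \in X_0$, note that under the auxiliary controller the closed-loop dynamics are $x(k+\tau+1|k) = \bar{A} x(k+\tau|k)$. Lemma~\ref{lem2} then gives both $u(k+\tau|k) = K x(k+\tau|k) \in U$ and $x(k+\tau+1|k) \in X_0$, completing the induction.

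Iterating this argument through $\tau = N-1$ yields that the entire control sequence $\inputseq(k) = [Kx(k|k), \dots, Kx(k+N-1|k)]$ satisfies all constraints of the $N$-QP, establishing feasibility. There is no real obstacle here: the lemma is essentially a corollary of Lemma~\ref{lem2} combined with Assumption~\ref{asm5}, and the only subtlety worth flagging is making sure the induction is carried through exactly $N-1$ steps so that the terminal state $x(k+N|k) \in X_0$ is also captured by the $X_0$ constraint demanded in the $N$-QP formulation.
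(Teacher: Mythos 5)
Your proof is correct and follows exactly the route the paper intends: the paper's own proof is the one-line remark that the lemma ``is a direct result of Lemma~\ref{lem2} and Assumption~\ref{asm5},'' and your induction on $\tau$ using the forward invariance of $X_0$ under $\bar{A}$ is precisely the argument being elided. Your explicit note that the induction must run through $\tau = N-1$ so the terminal state lands in $X_0$ is a worthwhile detail the paper leaves implicit.
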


\begin{proof} It is a direct result of Lemma~\ref{lem2} and Assumption~\ref{asm5}.
\end{proof}

The following lemma demonstrates that $V_N$ is bounded above and below by two quadratic functions, respectively.

\begin{lemma}\textbf{(Positive-definite and decrescent properties of $V_N$) }The function
  $V_N$ is quadratically bounded above and below as
  $\subscr{\lambda}{min}(P)\|x\|^2\leq V_N(x) \leq
  \phi_N\|x\|^2$ for any $x\in X_0$. \label{lem4}
\end{lemma}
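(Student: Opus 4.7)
\emph{Proof plan.}

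The lower bound is immediate: by expanding the cost in the $N$-QP and noting that all running and terminal penalties are non-negative,
\[
V_N(x) \;\geq\; \|x(k|k)\|_P^2 \;=\; \|x\|_P^2 \;\geq\; \subscr{\lambda}{min}(P)\|x\|^2,
\]
so no further work is needed for that direction.

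For the upper bound, the plan is to evaluate the cost along the feasible trajectory produced by the auxiliary controller and bound it term by term. By Lemma~\ref{lem3}, the input sequence $u(k+\tau|k) = K x(k+\tau|k)$ is feasible for the $N$-QP starting at $x\in X_0$, and the associated state trajectory is $x(k+\tau|k) = \bar{A}^\tau x$. Therefore
\[
V_N(x) \;\leq\; \sum_{\tau=0}^{N-1}\|\bar{A}^\tau x\|_{P+K^TQK}^2 \;+\; \|\bar{A}^N x\|_P^2.
\]
Since $K^TQK\succeq 0$ implies $P\preceq P+K^TQK$, every one of these $N+1$ terms is at most $\subscr{\lambda}{max}(P+K^TQK)\|\bar{A}^\tau x\|^2$.

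Next I would use Lemma~\ref{lem2} to control the growth of $\bar{A}^\tau x$. From inequality~\eqref{e42} and the Rayleigh--Ritz bound $\|x\|_{\bar Q}^2 \geq \frac{\subscr{\lambda}{min}(\bar Q)}{\subscr{\lambda}{max}(\bar P)}\|x\|_{\bar P}^2$, the function $W$ contracts by the factor $\lambda$ along $\bar{A}$, so by induction
\[
\|\bar{A}^\tau x\|_{\bar P}^2 \;\leq\; \lambda^\tau \|x\|_{\bar P}^2 \;\leq\; \lambda^\tau \subscr{\lambda}{max}(\bar P)\|x\|^2,
\]
and consequently $\|\bar{A}^\tau x\|^2 \leq \frac{\lambda^\tau \subscr{\lambda}{max}(\bar P)}{\subscr{\lambda}{min}(\bar P)}\|x\|^2$.

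Substituting this into the previous display and summing the geometric series $\sum_{\tau=0}^{N} \lambda^\tau = \frac{1-\lambda^{N+1}}{1-\lambda}$ gives
\[
V_N(x) \;\leq\; \frac{\subscr{\lambda}{max}(\bar P)\,\subscr{\lambda}{max}(P+K^TQK)}{\subscr{\lambda}{min}(\bar P)}\,\frac{1-\lambda^{N+1}}{1-\lambda}\,\|x\|^2 \;=\; \phi_N\|x\|^2,
\]
which matches the definition of $\phi_N$ in Table~\ref{ta:thms}. The only real obstacle is keeping the constants straight while propagating the $\bar P$-weighted contraction through to an ordinary Euclidean-norm bound, but this is a routine chain of Rayleigh--Ritz inequalities once the feasible trajectory from Lemma~\ref{lem3} and the decay from Lemma~\ref{lem2} are in hand.
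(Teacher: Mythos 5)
Your proof follows essentially the same route as the paper's: the lower bound is read off from the first running cost, and the upper bound is obtained by evaluating the objective along the auxiliary-controller trajectory guaranteed feasible by Lemma~\ref{lem3}, passing to the Lyapunov function $W$, applying the contraction from Lemma~\ref{lem2}, and summing the geometric series to recover $\phi_N$. The only quibble is that the Rayleigh--Ritz chain you write down actually yields the contraction factor $1-\subscr{\lambda}{min}(\bar{Q})/\subscr{\lambda}{max}(\bar{P})$ rather than the tabulated $\lambda = 1-\subscr{\lambda}{max}(\bar{Q})/\subscr{\lambda}{min}(\bar{P})$, but the paper's own proof contains the same constant slippage, so this is not a substantive difference in approach.
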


\begin{proof}Consider any $x \in X_0$. It is easy to see that
  $V_N(x)\geq\subscr{\lambda}{min}(P)\|x\|^2$, and thus
  positive definiteness of $V_N$ follows. We now proceed to show
  that $V_N$ is decrescent. In order to simplify the notations
  in the proof, we will drop the dependency on time~$k$ in what
  follows. Toward this end, we let $\{x(\tau)\}_{\tau\geq0}$ be the
  solution produced by the system $x(\tau+1) = \bar{A} x(\tau)$, that
  is, the closed-loop system solution of the dynamics~\eqref{e10} under the auxiliary controller $Kx$, with initial state $x(0) = x\in X_0$. We denote $x(\tau|0 )\equiv
  x(\tau)$ and $u(\tau|0) \equiv u(\tau)$. Recall the
  estimate~\eqref{e42}:
\begin{align}
  W(x(\tau+1)) &\leq W(x(\tau))-
  \subscr{\lambda}{min}(\bar{Q})\|x(\tau)\|^2 \leq W(x(\tau))-
  \frac{\subscr{\lambda}{max}(\bar{Q})}{\subscr{\lambda}{max}(\bar{P})}
  W(x(\tau)),\label{e45}
\end{align}
where we use the property that
$\subscr{\lambda}{min}(\bar{P})\|x\|^2\leq W(x) \leq
\subscr{\lambda}{max}(\bar{P})\|x\|^2$. It follows from
Lemma~\ref{lem3} that the sequence of control commands $u(\tau) =
Kx(\tau)$ for $0\leq \tau \leq N-1$ consists of a feasible solution
to the $N$-QP parameterized by $x\in X_0$. Then
we achieve the following on $V_N(x)$:
\begin{align}
  V_N(x)
  &\leq \sum_{\tau=0}^{N-1}\big(\|x(\tau)\|^2_P+\|Kx(\tau)\|^2_Q\big) + \|x(N)\|^2_P\nnum\\
  &\leq \sum_{\tau=0}^{N-1}\subscr{\lambda}{max}(P + K^TQK)\|x(\tau)\|^2+ \subscr{\lambda}{max}(P)\|x(N)\|^2\nnum\\
  &\leq \frac{\subscr{\lambda}{max}(P + K^TQK)}{\subscr{\lambda}{min}(\bar{P})}\sum_{\tau=0}^{N-1}W(x(\tau))+
  \frac{\subscr{\lambda}{max}(P)}{\subscr{\lambda}{min}(\bar{P})}W(x(N)).\label{e46}
\end{align}
Substituting inequality~\eqref{e45} into~\eqref{e46}, we obtain the
following estimates on $V_N(x)$:
\begin{align}
  V_N(x)
  &\leq \frac{\subscr{\lambda}{max}(P + K^TQK)}{\subscr{\lambda}{min}(\bar{P})}W(x)\sum_{\tau=0}^{N-1}\lambda^{\tau}
  + \frac{\subscr{\lambda}{max}(P)}{\subscr{\lambda}{min}(\bar{P})}W(x)\lambda^N\nnum\\
  &\leq\frac{\subscr{\lambda}{max}(\bar{P})\subscr{\lambda}{max}(P +
    K^TQK)}{\subscr{\lambda}{min}(\bar{P})}\frac{1-\lambda^{N+1}}{1-\lambda}\|x\|^2.\nnum
\end{align}
where we use the fact $\lambda = 1-\frac{\subscr{\lambda}{max}(\bar{Q})}
{\subscr{\lambda}{max}(\bar{P})}\in(0,1)$ in~\cite{TM-NF-MK:82}.
The decrescent property of $V_N$ immediately follows from the above relations.
\end{proof}

Next, one can show that for any $x\in X_0$, $V_N(x)$ does not decrease as $N$ increases.
\begin{lemma}[Monotonicity of $V_N$]
  The optimal value function $V_N$ is monotonic in $N$; i.e.,
  for any $x\in X_0$, $V_{N'}(x)\leq V_N(x)$ for $N'
  < N$.\label{lem5}
\end{lemma}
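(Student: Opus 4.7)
The plan is a standard truncation argument: construct a feasible candidate for the $N'$-QP by keeping the first $N'$ control inputs of an optimal $N$-QP solution, then bound the resulting cost by $V_N(x)$ using the fact that the discarded terms are non-negative and that the terminal state cost coincides with the running state cost.

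More concretely, fix $x\in X_0$ and let $\mathbf{u}^*(k)=[u^*(k|k),\dots,u^*(k+N-1|k)]$ be an optimal solution to the $N$-QP parameterized by $x(k)=x$, with associated state trajectory $\{x^*(k+\tau|k)\}_{\tau=0}^{N}$. First I would verify that the truncated input sequence $[u^*(k|k),\dots,u^*(k+N'-1|k)]$ is feasible for the $N'$-QP parameterized by $x$. This is immediate from the $N$-QP constraints: the dynamics equality holds on $0\le\tau\le N'-1$, the inputs satisfy $u^*(k+\tau|k)\in U$, and every intermediate state satisfies $x^*(k+\tau+1|k)\in X_0$ for $0\le\tau\le N-1$, which in particular gives the terminal requirement $x^*(k+N'|k)\in X_0$ needed by the $N'$-QP.

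Second I would compare the cost of this truncated candidate with $V_N(x)$. Denote the candidate cost by
\begin{align*}
J &\triangleq \sum_{\tau=0}^{N'-1}\bigl(\|x^*(k+\tau|k)\|_P^2+\|u^*(k+\tau|k)\|_Q^2\bigr)+\|x^*(k+N'|k)\|_P^2.
\end{align*}
Since the terminal state cost in the $N$-QP has the same quadratic form as the running state cost, the term $\|x^*(k+N'|k)\|_P^2$ coincides with the state-cost contribution at index $\tau=N'$ inside $V_N(x)$. Thus,
\begin{align*}
V_N(x)-J &= \|u^*(k+N'|k)\|_Q^2+\sum_{\tau=N'+1}^{N-1}\bigl(\|x^*(k+\tau|k)\|_P^2+\|u^*(k+\tau|k)\|_Q^2\bigr)+\|x^*(k+N|k)\|_P^2 \;\ge\; 0,
\end{align*}
because $P,Q\succ0$. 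Since $V_{N'}(x)$ is the infimum over feasible solutions of the $N'$-QP, $V_{N'}(x)\le J\le V_N(x)$, which proves the claim.

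The only non-routine point is noticing why one does not pay a mismatched ``terminal penalty'' when truncating: this is precisely the reason AR-RHC sets the terminal cost equal to the running state cost (as emphasized before Theorem~\ref{the1}), so the telescoping is clean and no additional positivity or Lyapunov-decrease argument is needed.
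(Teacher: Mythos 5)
Your proof is correct and follows essentially the same truncation argument as the paper: restrict an optimal $N$-QP input sequence to its first $N'$ entries, check feasibility for the $N'$-QP, and use the coincidence of terminal and running state costs so that the cost difference telescopes into a sum of non-negative terms. No substantive differences from the paper's proof.
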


\begin{proof}
  Consider $N' < N$, and denote by $J_N$ and $J_{N'}$ the objective functions of the $N$-QP and the $N'$-QP, respectively. Let ${\inputseq}_N$ be a solution to the
  $N$-QP parameterized by $x$, with $\inputseq_N =
  [u(0),\dots,u(N-1)]$, and let ${\inputseq}_{N'}$, with $\inputseq_{N'} =
  [u(0),\dots,u(N'-1)]$, be a solution to the
  ${N'}$-QP parameterized by $x\in X_0$. We construct $\tilde{{\inputseq}}_{N'} \in U^{N'}$, a truncated version of
  ${\inputseq}_N$, in such a way that $\tilde{u}(k) = u(k)$ for
  $0\leq k \leq {N'}-1$. Since ${\inputseq}_N$ is a
  solution to the $N$-QP parameterized by $x$, then one can show
  that $\tilde{\inputseq}_{N'}$
  is a feasible solution to the ${N'}$-QP parameterized by $x$. This
  renders the following upper bound on $V_{N'}(x)$:
\begin{align}
  V_{N'}(x) = J_{N'}(x,{\inputseq}_{N'}) \leq
  J_{N'}(x,\tilde{\inputseq}_{N'}).
\label{e48}
\end{align}
Denote by $\stateseq_N \triangleq [x(0),\cdots,x(N)]$ the
corresponding trajectory to $\inputseq_N$ with initial
state $x(0) = x$ and by $\tilde{\stateseq}_{N'} \triangleq
[\tilde{x}(0),\cdots,\tilde{x}_{N'}]$ the corresponding trajectory
generated by the sequence of $\tilde{{\inputseq}}_{N'}$ with the initial state $\tilde{x}(0) = x$. Since $\tilde{{\inputseq}}_{N'}$ is a truncated version of ${\inputseq}_N$,
we have that $\tilde{x}(k) = x(k)$ for $0\leq k \leq N'$. Denote
further $\tilde{\inputseq}_{N'} \triangleq
[\tilde{u}(0),\cdots,\tilde{u}(N'-1)]$. Then we have
\begin{align}
  &J_{N'}(x,\tilde{{\inputseq}}_{N'})
  =\sum_{k=1}^{{N'}}
  \big(\|\tilde{x}(k)\|^2_P+\|\tilde{u}(k)\|^2_{Q}\big) + \|\tilde{x}(N')\|^2_P\nnum\\
  &=\sum_{k=1}^{{N'}}
  \big(\|x(k)\|^2_P+\|u(k)\|^2_{Q}\big) + \|x(N')\|^2_P
  \leq\sum_{k=1}^{N}
  \big(\|x(k)\|^2_P+\|u(k)\|^2_{Q}\big) + \|x(N)\|^2_P = V_N(x).\nnum
\end{align}

The combination of~\eqref{e48} and the above relation establishes that
$V_{N'}(x)\leq V_N(x)$ for $x\in X_0$.
\end{proof}

The following lemma formalizes that for any $x\in X_0$, the
difference between $V_{N+1}(x)$ and $V_N(x)$ decreases as
$N$ increases by noting that $V_N(x)\leq V_{N+1}(x)$ and
$\alpha_N$ is strictly decreasing in $N$, where $V_{N+1}$ and
$V_N$ are the optimal value functions for the
$(N+1)$-QP and the $N$-QP, respectively. This property
is referred to as the property of diminishing ratios of $V_N$ in
$N$ by noting that $\alpha_N\searrow0$ as $N\nearrow+\infty$.

\begin{lemma}\textbf{(The diminishing ratios of $V_N$ in $N$)} The optimal
  value function $V_N$ is diminishingly increasing in $N$ in
  such a fashion that
  $\frac{V_{N+1}(x)-V_N(x)}{V_N(x)}\leq \alpha_N$
  for any $x\in X_0$.\label{lem6}
\end{lemma}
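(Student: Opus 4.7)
The plan is to upper-bound $V_{N+1}(x)$ by exhibiting a feasible candidate for the $(N+1)$-QP built from an optimal solution of the $N$-QP, and then to bound the resulting excess cost via a telescoping contraction along the optimal trajectory of the $N$-QP. Let $\inputseq_N^* = [u^*(0),\ldots,u^*(N-1)]$ and $\stateseq_N^* = [x^*(0)=x,\ldots,x^*(N)]$ denote an optimal input/state pair of the $N$-QP parameterized by $x\in X_0$. Because $x^*(N)\in X_0$ by the terminal constraint, Lemma~\ref{lem2} and Assumption~\ref{asm5} ensure that the extended sequence $\tilde{\inputseq}_{N+1} \triangleq [u^*(0),\ldots,u^*(N-1),Kx^*(N)]$ is feasible for the $(N+1)$-QP at $x$: the added input $Kx^*(N)$ lies in $U$ and the new terminal state $\bar{A}x^*(N)$ lies in $X_0$. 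Evaluating the $(N+1)$-QP cost on $\tilde{\inputseq}_{N+1}$ and subtracting $V_N(x)$ gives
\begin{align*}
V_{N+1}(x) - V_N(x) \leq \|Kx^*(N)\|_Q^2 + \|\bar{A}x^*(N)\|_P^2 \leq \subscr{\lambda}{max}(K^TQK+\bar{A}^TP\bar{A})\,\|x^*(N)\|^2.
\end{align*}

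It remains to bound $\|x^*(N)\|^2$ in terms of $V_N(x)$, and this is where the product factor in $\alpha_N$ is recovered. I will invoke the principle of optimality: for each $k\in\{0,\ldots,N-1\}$ the tail $[u^*(k),\ldots,u^*(N-1)]$ is an optimal solution of the $(N-k)$-QP at $x^*(k)$, which yields the Bellman identity
\begin{align*}
V_{N-k}(x^*(k)) - V_{N-k-1}(x^*(k+1)) = \|x^*(k)\|_P^2 + \|u^*(k)\|_Q^2 \geq \subscr{\lambda}{min}(P)\,\|x^*(k)\|^2.
\end{align*}
Combining this with the decrescent bound of Lemma~\ref{lem4}, namely $\|x^*(k)\|^2 \geq V_{N-k}(x^*(k))/\phi_{N-k}$, produces the per-stage contraction $V_{N-k-1}(x^*(k+1)) \leq (1-\subscr{\lambda}{min}(P)/\phi_{N-k})V_{N-k}(x^*(k))$. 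Telescoping this inequality from $k=0$ to $k=N-1$ and invoking $V_0(x^*(N)) = \|x^*(N)\|_P^2 \geq \subscr{\lambda}{min}(P)\,\|x^*(N)\|^2$ gives
\begin{align*}
\|x^*(N)\|^2 \leq \frac{1}{\subscr{\lambda}{min}(P)}\prod_{\kappa=0}^{N-1}\Big(1-\frac{\subscr{\lambda}{min}(P)}{\phi_{\kappa+1}}\Big)V_N(x).
\end{align*}
Substituting this into the first display yields exactly $V_{N+1}(x)-V_N(x) \leq \alpha_N V_N(x)$, which is the claim.

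The main obstacle is obtaining a bound on $\|x^*(N)\|^2$ tight enough to recover the full product factor appearing in $\alpha_N$: a single application of the decrescent bound in Lemma~\ref{lem4} would only give a constant multiple of $V_N(x)$ and would lose the contraction $\prod_{\kappa=0}^{N-1}(1-\subscr{\lambda}{min}(P)/\phi_{\kappa+1})$ that drives $\alpha_N\searrow 0$ as $N\to\infty$. The stage-by-stage iteration along the optimal trajectory, which couples the Bellman identity with the decrescent bound at each step, is precisely the device that extracts one contraction factor per stage and delivers the diminishing ratio.
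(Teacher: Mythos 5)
Your proposal is correct and matches the paper's own proof essentially step for step: the same extension of the optimal $N$-QP input by $Kx^*(N)$ (feasible by Lemma~\ref{lem2}), the same bound $V_{N+1}(x)-V_N(x)\leq \subscr{\lambda}{max}(K^TQK+\bar{A}^TP\bar{A})\|x^*(N)\|^2$, and the same per-stage contraction obtained by coupling Bellman's principle with the decrescent bound of Lemma~\ref{lem4}. The only difference is presentational — you telescope forward along the optimal trajectory where the paper runs a backward induction on the remaining horizon $\ell$ — and the resulting product $\prod_{\kappa=0}^{N-1}(1-\subscr{\lambda}{min}(P)/\phi_{\kappa+1})$ is identical.
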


\begin{proof} Let ${\inputseq}_N$, with
  $\inputseq_N = [u(0),\dots,u(N-1)]$, be a solution to the $N$-QP parameterized by $x \in X_0$.  Let $\stateseq_N=[x(0),\dots,x(N)]$, $x(0) = x$, be the corresponding
  trajectory. Notice that $x(k)\in X_0$ for $0\leq k\leq N$. We
  construct an extended version $\tilde{\inputseq}_{N+1} \in U^{N+1}$
  of ${\inputseq}_N$ as $\tilde{\inputseq}_{N+1} = [u(0),\dots,u(N-1),
  K x(N)]$. Since $x(N)\in X_0$, then $\tilde{x}(N+1) :=
  \bar{A}x(N) \in X_0$ by Lemma~\ref{lem2}, implying that
  $\tilde{\inputseq}_{N+1}$ consists of a feasible solution to the $(N+1)$-QP
  parameterized by $x$.  Then we establish the following upper bounds
  on $V_{N+1}(x)$:
  \begin{align}
    V_{N+1}(x)\leq J_{N+1}(x,\tilde{\inputseq}_{N+1})=J_{N}(x ,{\inputseq}_N) + \|Kx(N)\|^2_Q + \|\tilde{x}(N+1)\|_P^2\leq V_N(x) + \varsigma\|x(N)\|^2,\label{e60}
  \end{align}
  where $\varsigma := \subscr{\lambda}{max}(K^TQK +
  \bar{A}^TP\bar{A})$. We now turn our attention to find a relation
  between $\|x(N)\|^2$ and $V_N(x)$. To achieve this, we will
  show the following holds for $\ell\in\{0,\cdots,N\}$ by induction:
\begin{align}
  &V_{\ell}(x(N-\ell)) \leq
  \prod_{\kappa=\ell}^{N-1}(1-\frac{\subscr{\lambda}{min}(P)}{\phi_{\kappa+1}})
  V_{N}(x).\label{e51}
\end{align}
It follows from Bellman's principle of optimality that
\begin{align}
  V_{N}(x) = \|x(0)\|_P^2 + \|u(0)\|^2_Q +
  V_{N-1}(x(1)).\nnum
\end{align}
We can further see that $V_N(x) - V_{N-1}(x(1))$ is
lower bounded in the following way:
\begin{align}
  V_N(x) - V_{N-1}(x(1))
  \geq \subscr{\lambda}{min}(P)\|x\|^2
  \geq\frac{\subscr{\lambda}{min}(P)}{\phi_N}V_N(x),\label{e9}
\end{align}
where we use the decrescent property in Lemma~\ref{lem4} in the last
inequality.  Rearrange terms in~\eqref{e9} and it renders
that~\eqref{e51} holds for $\ell = N-1$.

Assume that~\eqref{e51} holds for some $\ell+1\in\{1,\cdots,N-1\}$;
i.e., the following holds:
\begin{align}
  V_{\ell+1}(x(N-\ell-1))\leq
  \prod_{\kappa=\ell+1}^{N-1}(1-\frac{\subscr{\lambda}{min}(P)}{\phi_{\kappa+1}})V_{N}(x).
\label{e53}
\end{align}
Similar to~\eqref{e9}, it follows from Bellman's principle of optimality and Lemma~\ref{lem4} that \begin{align}
  V_{\ell+1}(x(N-\ell-1)) - V_{\ell}(x(N-\ell))
  \geq \subscr{\lambda}{min}(P)\|x(N-\ell-1)\|^2
  \geq\frac{\subscr{\lambda}{min}(P)}{\phi_{\ell+1}}V_{\ell+1}(x(N-\ell-1)).\label{e54}
\end{align}
Combining~\eqref{e53} and~\eqref{e54} renders that
\begin{align}
  V_{\ell}(x(N-\ell))
  \leq (1-\frac{\subscr{\lambda}{min}(P)}{\phi_{\ell+1}}) V_{\ell+1}(x(N-\ell-1))
  \leq\prod_{\kappa=\ell}^{N-1}(1-\frac{\subscr{\lambda}{min}(P)}{\phi_{\kappa+1}})
  V_{N}(x).\nnum
\end{align}
This implies~\eqref{e51} holds for $\ell$. By induction, we conclude
that~\eqref{e51} holds for $\ell\in\{0,\cdots,N\}$.
Let $\ell=0$ in~\eqref{e51}, and we have that $V_0(x(N)) \leq
\prod_{\kappa=0}^{N-1}(1-\frac{\subscr{\lambda}{min}(P)}{\phi_{\kappa+1}})
V_{N}(x)$, implying that
$\|x(N)\|^2\leq\frac{1}{\subscr{\lambda}{min}(P)}
\prod_{\kappa=0}^{N-1}(1-\frac{\subscr{\lambda}{min}(P)}{\phi_{\kappa+1}})V_{N}(x)$
by Lemma~\ref{lem4}. By combining this relation with~\eqref{e60}, we
obtain the desired relation between $V_{N+1}$ and $V_N$.
\end{proof}

A relation between $V_N(x(k+1|k))$ and $V_N(x(k))$ for $x(k)\in X_0$, and $x(k+1|k)$ generated
through the $N$-QP, is found next.
\begin{lemma}[Decreasing property of $V_N$ in $X_0$] With $x(k+1|k)$ generated through the $N$-QP starting from $x(k)$, the
  following decreasing property holds for any $x(k)\in X_0$:
  \begin{align}V_N(x(k+1|k))\leq \rho_N V_N(x(k)).\nnum
  \end{align} \label{lem9}
\end{lemma}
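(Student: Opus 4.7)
The plan is to combine Bellman's principle of optimality with the two structural properties of $V_N$ established earlier, namely the decrescent upper/lower bound (Lemma~\ref{lem4}) and the diminishing-ratio bound (Lemma~\ref{lem6}). The factorized form $\rho_N = (1+\alpha_{N-1})(1-\tfrac{\lambda_{\min}(P)}{\phi_N})$ already suggests this split: the second factor will come from shrinking the horizon from $N$ to $N-1$ via dynamic programming, and the first factor from growing it back from $N-1$ to $N$ via the monotonic growth rate $\alpha_{N-1}$.

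First I would note that $x(k+1\mid k)\in X_0$, since the state-constraint in the $N$-QP enforces $x(k+\tau+1\mid k)\in X_0$ for all $0\leq \tau\leq N-1$; this legitimizes applying Lemmas~\ref{lem4}--\ref{lem6} at the point $x(k+1\mid k)$. Second, by Bellman's principle of optimality applied to the $N$-QP parameterized by $x(k)\in X_0$,
\begin{align*}
V_N(x(k)) = \|x(k)\|_P^2 + \|u(k\mid k)\|_Q^2 + V_{N-1}(x(k+1\mid k)),
\end{align*}
exactly as used in the step~\eqref{e9} inside the proof of Lemma~\ref{lem6}. Dropping the non-negative input cost and using $\|x(k)\|_P^2 \geq \lambda_{\min}(P)\|x(k)\|^2$, together with the decrescent bound $V_N(x(k))\leq \phi_N\|x(k)\|^2$ from Lemma~\ref{lem4}, yields
\begin{align*}
V_{N-1}(x(k+1\mid k)) \leq V_N(x(k)) - \tfrac{\lambda_{\min}(P)}{\phi_N}V_N(x(k)) = \bigl(1-\tfrac{\lambda_{\min}(P)}{\phi_N}\bigr)V_N(x(k)).
\end{align*}

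Third, I would invoke Lemma~\ref{lem6} at the point $x(k+1\mid k)\in X_0$ with horizon $N-1$, which gives $V_N(x(k+1\mid k))\leq (1+\alpha_{N-1})V_{N-1}(x(k+1\mid k))$. Chaining the two inequalities produces
\begin{align*}
V_N(x(k+1\mid k)) \leq (1+\alpha_{N-1})\bigl(1-\tfrac{\lambda_{\min}(P)}{\phi_N}\bigr)V_N(x(k)) = \rho_N V_N(x(k)),
\end{align*}
as required.

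No step is really hard here; the only mild subtlety is bookkeeping. One must be sure that (i) the optimal trajectory stays in $X_0$ so that Lemmas~\ref{lem4} and~\ref{lem6} are applicable at $x(k+1\mid k)$, and (ii) the horizon indices match: Lemma~\ref{lem6} is stated as $V_{M+1}\leq (1+\alpha_M)V_M$, which I use with $M=N-1$, hence the factor $1+\alpha_{N-1}$ rather than $1+\alpha_N$. Everything else is routine algebra.
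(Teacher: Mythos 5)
Your proof is correct and follows essentially the same route as the paper: Bellman's principle of optimality plus the lower bound $\|x(k)\|_P^2\geq \lambda_{\min}(P)\|x(k)\|^2$ and the decrescent bound of Lemma~\ref{lem4} give $V_{N-1}(x(k+1|k))\leq (1-\tfrac{\lambda_{\min}(P)}{\phi_N})V_N(x(k))$, and Lemma~\ref{lem6} with horizon $N-1$ supplies the factor $(1+\alpha_{N-1})$. The only difference is the order in which the two factors are introduced, which is immaterial.
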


\begin{proof}
  With Lemma~\ref{lem4} and~\ref{lem6}, we reach the following
  relation between $V_N(x(k+1|k))$ and $V_N(x(k))$ for
  any $x(k)\in X_0$:
  \begin{align}
    &V_N(x(k+1|k))\leq (1+\alpha_{N-1})V_{N-1}(x(1))\leq (1+\alpha_{N-1}) (V_N(x(k)) - \|x(k)\|_P^2)\nnum\\
    &\leq (1+\alpha_{N-1}) (V_N(x(k)) - \subscr{\lambda}{min}(P)\|x(k)\|^2)
    \leq (1+\alpha_{N-1}) (1 -
    \frac{\subscr{\lambda}{min}(P)}{\phi_N})V_N(x(k)),\nnum
  \end{align} where Lemma~\ref{lem6} and Lemma~\ref{lem4} are used in the first and last inequalities,
  respectively, by noting that $x(k+1|k)$ and $x(k)$ in
  $X_0$.
\end{proof}

\textbf{Proof of Theorem~\ref{the1}:}

\begin{proof} \textbf{[Part 1: Exponential stability]} Let us consider the first part of $N\geq \max\{N^*(S)+1,S+1\}$. Recall that $x(0)\in X_0$ and the state constraint $X_0$ is enforced in the $N$-QP. Repeatedly apply Lemma~\ref{lem3} and we have that $x(k)\in X_0$ for all $k\geq0$. We now distinguish four cases:

\emph{Case 1:} $\vartheta(k) = 1$ and $\vartheta(k-1) = 0$. For this case, $s(k) =1$, $s(k-1) = 0$, and we have \begin{align}&V_{N-s(k)}(x(k+1)) = V_{N-1}(x(k+1)) \leq \rho_{N-1} V_{N-1}(x(k))\nnum\\
&\leq \rho_{N-1} V_{N}(x(k)) = \rho_{N-1} V_{N-s(k-1)}(x(k)),\nnum\end{align} where the first inequality uses Lemma~\ref{lem9} and the principle of optimality, and the second one exploits Lemma~\ref{lem5}.

\emph{Case 2:} $\vartheta(k) = \vartheta(k-1) = 0$. Here, $s(k) = s(k-1) = 0$. By Lemma~\ref{lem9}, we have \begin{align*}V_{N-s(k)}(x(k+1)) = V_N(x(k+1)) \leq \rho_N V_N(x(k)) = \rho_N V_{N-s(k-1)}(x(k)).\end{align*}

\emph{Case 3:} $\vartheta(k) = \vartheta(k-1) = 1$. Note that $s(k) = s(k-1) + 1$, and then \begin{align*}V_{N-s(k)}(x(k+1)) \leq \rho_{N-s(k)} V_{N-s(k)}(x(k)) \leq \rho_{N-s(k)} V_{N-s(k-1)}(x(k)),\end{align*} where the first inequality utilizes Lemmas~\ref{lem9} and the principle of optimality, and the second one exploits Lemma~\ref{lem5}.

\emph{Case 4:} $\vartheta(k) = 0$ and $\vartheta(k-1) = 1$. For this case, we have $s(k) = 0$, $s(k-1)\geq1$ and thus \begin{align*}V_{N-s(k)}(x(k+1)) = V_N(x(k+1)) \leq \rho_N V_N(x(k))\leq \rho_N\prod_{\ell=N-s(k-1)}^{N-1}(1+\alpha_{\ell}) V_{N-s(k-1)}(x(k)),\end{align*} where the last inequality repeatedly applies Lemma~\ref{lem6}.

Combine the above four cases, and it renders the following:
\begin{align}V_{N-s(k)}(x(k+1)) &\leq \max\{\max_{s\in\{1,\cdots,S\}}\{\rho_{N-s}\},
\rho_N\max_{s=1,\cdots,S}\{\prod_{\ell=N-s}^{N-1}(1+\alpha_{\ell})\}\} V_{N-s(k-1)}(x(k))\nnum\\
&\leq \gamma_{N,S} V_{N-s(k-1)}(x(k)).\label{e15}
\end{align}

Since $0<\gamma_{N,S}<1$, $\{V_{N-s(k-1)}(x(k))\}$ exponentially diminishes, and the following holds: \begin{align}
    V_{N-s(k-1)}(x(k))\leq \gamma_{N,S}^kV_N(x(0)).\label{e14}
  \end{align} Recall $N \geq S+1$. It follows from~\eqref{e14} that the
  infinite-horizon cost is characterized as
  follows:
  \begin{align}
    \sum_{k=0}^{+\infty}(\|x(k)\|_P^2 + \|u(k)\|_Q^2) \leq
    \sum_{k=0}^{+\infty}V_{N-s(k-1)}(x(k))\leq
    \sum_{k=0}^{+\infty}\gamma_{N,S}^kV_N(x(0))=\frac{1}{1-\gamma_{N,S}}V_N(x(0)).\nnum
  \end{align} We then have finished the proofs for the first part.

  \textbf{[Part 2: Asymptotic stability]} We now proceed to show the second part of $N\geq \max\{\hat{N}^*(S)+1,S+1\}$. Towards this end, we partition the time horizon $\{0,1,\cdots\}$ into a sequence of subsets $\{C_1, A_1, C_2, A_2, \cdots\}$ where $C_i = \{c_i^L,\cdots,c_i^U\}$ and $A_i = \{a_i^L,\cdots,a_i^U\}$ with for $k\in C_i$, then $\vartheta(k) = 0$; and $k\in A_i$, then $\vartheta(k) = 1$. Note that $c_0^L = 0$ and $a_i^L = c_i^U + 1$.

\emph{Case 1:} $k\in C_i\setminus\{c_i^L\}$. Note that $s(k) = s(k-1) = 0$ for all $k\in C_i\setminus\{c_i^L\}$. By Lemma~\ref{lem9}, we have \begin{align*}V_{N-s(k)}(x(k+1)) \leq \rho_N V_{N-s(k-1)}(x(k)), \quad \forall k\in C_i\setminus\{c_i^L\}.\end{align*}

\emph{Case 2:} $k = a_i^L$. Note that $\vartheta(a_i^L) = 1$ and $\vartheta(a_i^L-1) = 0$. By Case 1 in Part 1, we have \begin{align*}V_{N-s(a_i^L)}(x(a_i^L+1))\leq \rho_{N-1}V_{N-s(a_i^L-1)}(x(a_i^L)).\end{align*}

\emph{Case 3:} $k = A_i\setminus\{a_i^L\}$. Recall that $\vartheta(k) = 1$ for $k\in A_i$. By repeating the result of Case 3 in Part 1, we have \begin{align*}V_{N-s(k)}(x(k+1))\leq \prod_{\ell=2}^{k-a_i^L}\rho_{N-\ell}V_{N-s(a_i^L)}(x(a_i^L+1)),\quad \forall k\in A_i\setminus\{a_i^L\}.\end{align*}


\emph{Case 4:} $k = c_{i+1}^L = a_i^U + 1$. Note that $\vartheta(c_{i+1}^L) = 0$ and $\vartheta(c_{i+1}^L-1) = 1$. By Case 4 in Part 1, it holds that \begin{align*}V_{N-s(c_{i+1}^L)}(x(c_{i+1}^L+1)) \leq \rho_N\prod_{\ell=N-s(c_{i+1}^L-1)}^{N-1}(1+\alpha_{\ell}) V_{N-s(c_{i+1}^L-1)}(x(c_{i+1}^L)).\end{align*}


The combination of the above four relations renders the following: \begin{align*}V_{N-s(c_{i+1}^L)}(x(c_{i+1}^L+1))&\leq \rho_N
  \prod_{\ell=N-s(c_{i+1}^L-1)}^{N-1}(1+\alpha_{\ell}) V_{N-s(c_{i+1}^L-1)}(x(c_{i+1}^L))\\
  &= \rho_N
  \prod_{\ell=N-s(c_{i+1}^L-1)}^{N-1}(1+\alpha_{\ell}) V_{N-s(c_{i+1}^L-1)}(x(a_i^U+1))\\
  &\leq \rho_N\prod_{\ell=2}^{a_i^U-a_i^L}\rho_{N-\ell}
  \prod_{\ell=N-s(c_{i+1}^L-1)}^{N-1}(1+\alpha_{\ell}) V_{N-s(c_{i+1}^L-1)}(x(a_i^L+1))\\
  &\leq \rho_N\rho_{N-1}\prod_{\ell=2}^{a_i^U-a_i^L}\rho_{N-\ell}
  \prod_{\ell=N-s(c_{i+1}^L-1)}^{N-1}(1+\alpha_{\ell}) V_{N-s(c_{i+1}^L-1)}(x(a_i^L))\\
  &= \rho_N\rho_{N-1}\prod_{\ell=2}^{a_i^U-a_i^L}\rho_{N-\ell}
  \prod_{\ell=N-s(c_{i+1}^L-1)}^{N-1}(1+\alpha_{\ell}) V_{N-s(c_{i+1}^L-1)}(x(c_i^U+1))\\
  &\leq \hat{\gamma}_{N,S}V_{N-s(c_{i+1}^L-1)}(x(c_i^L)),\end{align*} where the four inequalities sequentially apply Cases 4 to 1. Since $\hat{\gamma}_{N,S}\in(0,1)$, the subsequence $\{V_{N-s(c_{i+1}^L-1)}(x(c_{i+1}^L))\}$ exponentially decreases.

  By the above four cases, it is not difficult to verify that the following holds for all $k\in A_i\cup C_i\setminus\{c_i^L\}$: \begin{align*}V_{N-s(k-1)}(x(k))&\leq \max\{\rho_{N-1},1\}\rho_N\max_{s\in\{2,\cdots,S\}}\prod_{\ell=2}^s\rho_{N-\ell}
  \prod_{\ell=N-s(c_{i+1}^L-1)}^{N-1}(1+\alpha_{\ell}) V_{N-s(c_{i+1}^L-1)}(x(c_i^L)).\end{align*} Hence, the whole sequence $\{V_{N-s(k-1)}(x(k))\}$ diminishes. It establishes the asymptotical stability.

\end{proof}


\begin{thebibliography}{10}

\bibitem{SA-AC-SSS:09}
S.~Amin, A.~Cardenas, and S.S. Sastry.
\newblock Safe and secure networked control systems under denial-of-service
  attacks.
\newblock In {\em Hybrid systems: Computation and Control}, pages 31--45, 2009.

\bibitem{SA-XL-SS-AMB:10}
S.~Amin, X.~Litrico, S.S. Sastry, and A.M. Bayen.
\newblock Stealthy deception attacks on water {SCADA} systems.
\newblock In {\em Hybrid systems: Computation and Control}, pages 161--170,
  Stockholm, Sweden, 2010.

\bibitem{SA-GAS-SSS:10}
S.~Amin, G.A. Schwartz, and S.S. Sastry.
\newblock Security of interdependent and identical networked control systems.
\newblock {\em Automatica}, July 2010.
\newblock submitted.

\bibitem{GKB-VG-PJA:11}
G.K. Befekadu, V.~Gupta, and P.J. Antsaklis.
\newblock Risk-sensitive control under a class of denial-of-service attack
  models.
\newblock In {\em {A}merican {C}ontrol {C}onference}, pages 643--648, San
  Francisco, USA, June 2011.

\bibitem{MSB-SMP-WZ:00}
M.S. Branicky, S.M. Phillips, and W.~Zhang.
\newblock Stability of networked control systems: explicit analysis of delay.
\newblock In {\em {A}merican {C}ontrol {C}onference}, pages 2352--2357,
  Chicago, USA, 2000.

\bibitem{RWB-DL:00}
R.~W. Brockett and D.~Liberzon.
\newblock Quantized feedback stabilization of linear systems.
\newblock {\em IEEE Transactions on Automatic Control}, 45(7):1279--1289, 2000.

\bibitem{BD:11}
B.~Ding.
\newblock Stabilization of linear systems over networks with bounded packet
  loss and its use in model predictive control.
\newblock {\em Automatica}, 47(9):2526--2533, October 2011.

\bibitem{NF-LOM-EC:11}
N.~Falliere, L.O. Murchu, and E.~Chien.
\newblock W32.stuxnet dossier.
\newblock {\em Symantec Corporation}, 2011.

\bibitem{AG-CL-TB:10}
A.~Gupta, C.~Langbort, and T.~Basar.
\newblock Optimal control in the presence of an intelligent jammer with limited
  actions.
\newblock In {\em {IEEE} Int. Conf. on Decision and Control}, pages 1096--1101,
  Atlanta, USA, December 2010.

\bibitem{VG-NM:10}
V.~Gupta and N.~Martins.
\newblock On stability in the presence of analog erasure channels between
  controller and actuator.
\newblock {\em IEEE Transactions on Automatic Control}, 55(1):175--179, 2010.

\bibitem{VG-BS-SA-AG:06}
V.~Gupta, B.~Sinopoli, S.~Adlakha, and A.~Goldsmith.
\newblock Receding horizon networked control.
\newblock In {\em Allerton Conf. on Communications, Control and Computing},
  Illinois, USA, September 2006.

\bibitem{JH-PN-YX:07}
J.~Hespanha, P.~Naghshtabrizi, and Y.~Xu.
\newblock A survey of recent results in networked control systems.
\newblock {\em Proceedings of IEEE Special Issue on Technology of Networked
  Control Systems}, 95(1):138--162, 2007.

\bibitem{OCI-SY-TB:06}
O.C. Imer, S.~Yuksel, and T.~Basar.
\newblock Optimal control of {LTI} systems over communication networks.
\newblock {\em Automatica}, 42(9):1429--1440, 2006.

\bibitem{KK-KH:12}
K.~Kobayashi and K.~Hiraishi.
\newblock Self-triggered model predictive control with delay compensation for
  networked control systems.
\newblock In {\em acies}, pages 3200--3205, 2012.

\bibitem{DL-JPH:05}
D.~Liberzon and J.P. Hespanha.
\newblock Stabilization of nonlinear systems with limited information feedback.
\newblock {\em IEEE Transactions on Automatic Control}, 50(6):910--915, 2005.

\bibitem{GPL-JXM-DR-SCC:06}
G.P. Liu, J.X. Mu, D.~Rees, and S.C. Chai.
\newblock Design and stability analysis of networked control systems with
  random communication time delay using the modified {MPC}.
\newblock {\em International Journal of Control}, 79(4):288--297, April 2006.

\bibitem{DQM-JBR-CVR-POMS:00}
D.Q. Mayne, J.B. Rawlings, C.V. Rao, and P.O.M. Scokaert.
\newblock Constrained model predictive control: stability and optimality.
\newblock {\em Automatica}, 36:789--814, 2000.

\bibitem{YM-TK-KB-DD-LH-AP-BS:12}
Y.~Mo, T.~Kim, K.~Brancik, D.~Dickinson, L.~Heejo, A.~Perrig, and B.~Sinopoli.
\newblock Cyber-physical security of a smart grid infrastructure.
\newblock {\em Proceedings of the IEEE}, 100(195-209):215, 2012.

\bibitem{YM-BS:09}
Y.~Mo and B.~Sinopoli.
\newblock Secure control against replay attacks.
\newblock In {\em Allerton Conf. on Communications, Control and Computing},
  Illinois, USA, September 2009.

\bibitem{TM-NF-MK:82}
T.~Mori, N.~Fukuma, and M.~Kuwahara.
\newblock Upper and lower bounds for the solution to the discrete {L}yapunov
  matrix equation.
\newblock {\em International Journal of Control}, 36:889--892, 1982.

\bibitem{GNN-RJE-IMYM-WM:04}
G.N. Nair, R.J. Evans, I.M.Y. Mareels, and W.~Moran.
\newblock Topological feedback entropy and nonlinear stabilization.
\newblock {\em IEEE Transactions on Automatic Control}, 49(9):1585--1597, 2004.

\bibitem{GNN-FF-SZ-RJE:07}
G.N. Nair, F.~Fagnani, S.~Zampieri, and R.J. Evans.
\newblock Feedback control under data rate constraints: an overview.
\newblock {\em Proceddings of IEEE Special Issue on Technology of Networked
  Control Systems}, 95(1):108--137, 2007.

\bibitem{DN-AT:04}
D.~Nesic and A.~Teel.
\newblock Input-output stability properties of networked control systems.
\newblock {\em IEEE Transactions on Automatic Control}, 49(10):1650--1667,
  2004.

\bibitem{DP-PDC:08}
D.~Mu\ {n}oz de~la Pe\~{n}a and P.D. Christofides.
\newblock Lyapunov-based model predictive control of nonlinear systems subject
  to data losses.
\newblock {\em IEEE Transactions on Automatic Control}, 53(9):2076--2089,
  October 2008.

\bibitem{FP-AB-FB:09b}
F.~Pasqualetti, A.~Bicchi, and F.~Bullo.
\newblock Consensus computation in unreliable networks: {A} system theoretic
  approach.
\newblock {\em IEEE Transactions on Automatic Control}, February 2010.
\newblock To appear.

\bibitem{FP-RC-FB:11}
F.~Pasqualetti, R.~Carli, and F.~Bullo.
\newblock A distributed method for state estimation and false data detection in
  power networks.
\newblock In {\em {IEEE} Int. Conf. on Smart Grid Communications}, pages
  469--474, October 2011.

\bibitem{LS-BS-MF-KP-SSS:07}
L.~Schenato, B.~Sinopoli, M.~Franceschetti, K.~Poolla, and S.S. Sastry.
\newblock Foundations of control and estimation over lossy networks.
\newblock {\em Proceedings of IEEE Special Issue on Technology of Networked
  Control Systems}, 95(1):163--187, 2007.

\bibitem{SS-CNH:11}
S.~Sundaram and C.N. Hadjicostis.
\newblock Distributed function calculation via linear iterative strategies in
  the presence of malicious agents.
\newblock {\em IEEE Transactions on Automatic Control}, 56(7):1731--1742, 2011.

\bibitem{AT-SA-HS-KHJ-SSS:10}
A.~Teixeira, S.~Amin, H.~Sandberg, K.H. Johansson, and S.S. Sastry.
\newblock Cyber security analysis of state estimators in electric power
  systems.
\newblock In {\em {IEEE} Int. Conf. on Decision and Control}, pages 5991--5998,
  Atlanta, USA, December 2010.

\bibitem{LX-YM-BS:10}
L.~Xie, Y.~Mo, and B.~Sinopoli.
\newblock False data injection attacks in electricity markets.
\newblock In {\em {IEEE} Int. Conf. on Smart Grid Communications}, pages
  226--231, Gaithersburg, USA, October 2010.

\bibitem{MZ-EF:12}
M.~Zhu and E.~Frazzoli.
\newblock On distributed equilibrium seeking for generalized convex games.
\newblock In {\em {IEEE} Int. Conf. on Decision and Control}, Maui, HI,
  December 2012.
\newblock To appear.

\bibitem{MZ-SM:11b}
M.~Zhu and S.~Mart\'{\i}nez.
\newblock Attack-resilient distributed formation control via online adaptation.
\newblock In {\em {IEEE} Int. Conf. on Decision and Control}, pages 6624--6629,
  Orlando, FL, USA, December 2011.

\bibitem{MZ-SM:ACC11}
M.~Zhu and S.~Mart{\'\i}nez.
\newblock Stackelberg game analysis of correlated attacks in cyber-physical
  system.
\newblock In {\em {A}merican {C}ontrol {C}onference}, pages 4063--4068, June
  2011.

\bibitem{MZ-SM:09c}
M.~Zhu and S.~Mart{\'\i}nez.
\newblock On distributed convex optimization under inequality and equality
  constraints via primal-dual subgradient methods.
\newblock {\em IEEE Transactions on Automatic Control}, 57:151--164, 2012.

\bibitem{MZ-SM:acc-12}
M.~Zhu and S.~Mart{\'\i}nez.
\newblock On distributed resilient consensus against replay attacks in
  adversarial networks.
\newblock In {\em {A}merican {C}ontrol {C}onference}, pages 3553 -- 3558,
  Montreal, Canada, June 2012.

\end{thebibliography}

\end{document}